\documentclass{amsart}[12pt]
\usepackage{amsmath, amsthm, amscd, amsfonts,}
\usepackage{graphicx,float}

\usepackage{amssymb}

\usepackage{pb-diagram}

%
%





\setlength{\textheight}{22cm} \setlength{\textwidth}{14cm}
\setlength{\oddsidemargin}{1cm} \setlength{\evensidemargin}{1cm}

\newtheorem{theorem}{Theorem}[section]
\newtheorem{lemma}[theorem]{Lemma}

\newtheorem{corollary}[theorem]{Corollary}

\newtheorem{definition}[theorem]{Definition}

\newtheorem{remark}[theorem]{Remark}

\numberwithin{equation}{section}

\def\rmark{\mbox{$\rm\bf\rule{0.06em}{1.45ex}\kern-0.05em R$}}
\def\pmark{\mbox{$\rm\bf\rule{0.06em}{1.45ex}\kern-0.05em P$}}
\def\nmark{\mbox{$\rm\bf\rule{0.06em}{1.45ex}\kern-0.05em N$}}
\def\vdash{\mbox{$\rm\| \kern-0.13em -$}}

\usepackage{pb-diagram}

\def\rmark{\mbox{$\rm\bf\rule{0.06em}{1.45ex}\kern-0.05em R$}}
\def\pmark{\mbox{$\rm\bf\rule{0.06em}{1.45ex}\kern-0.05em P$}}
\def\nmark{\mbox{$\rm\bf\rule{0.06em}{1.45ex}\kern-0.05em N$}}
\def\vdash{\mbox{$\rm\| \kern-0.13em -$}}

\begin{document}

\title[On certain maximality principles]{On certain maximality principles}

\author[R. Mohammadpour]{Rahman Mohammadpour}

\thanks{This paper is based on  chapter 3 of the author's master of science thesis that is written under the supervisions of Mohammad Golshani and Massoud Pourmahdian.}
\thanks{The author is thankful to Mohammad Golshani, for all his help and support during the author's master of science degree and for giving useful comments on the draft version of this paper. The author is also grateful to Ali Enayat for his encouragement and his helpful comments on the draft version of the paper.}
 \maketitle

\begin{abstract}
We present new, streamlined proofs of certain maximality principles studied by Hamkins
and Woodin. Moreover, we formulate an intermediate maximality principle, which is
shown here to be equiconsistent with the existence of a weakly compact cardinal $\kappa$ such that $V_{\kappa}\prec V$.
\end{abstract}

\maketitle
\section{Introduction}
In his paper ``A Simple Maximality Principle'' [3], Joel Hamkins introduced a maximality principle that is exactly axiom $S5$ in the sense of modal logic. He also considered various versions of his maximality principle and proved the consistency of some of them, but the status of some others remained open. The necessary maximality principle ($\square {\rm MP}(\mathbb{R})$) is a strong principle that apparently its consistency is proved by Hugh Woodin in an unpublished work [3]. Also, in a joint work [4], Hamkins and Woodin proved that $\square_{c.c.c.}{\rm MP}_{c.c.c.}(\mathbb{R})$ is equiconsistent with the existence of a weakly compact cardinal. In this paper, we present new proofs of some results of Hamkins [3] which are simpler than the original ones. Moreover we introduce $\square_{c.c.c.}{\rm MP}(\mathbb{R})$ and  show that it is equiconsistent with the existence of a weakly compact cardinal $\kappa$ such that $V_{\kappa}\prec V$.
\section{Hamkins' Maximality principle}
For a sentence $\phi$ in the language of set theory and a model $V$ of $ZFC$, we say $\phi$ is forceable ($\diamondsuit\phi$) over $V$, if there exists a forcing notion $\mathbb{P}\in V$ such that $V^{\mathbb{P}}\models\phi$ and $\phi$ is necessary ($\square\phi$) over $V$, if for all forcing notions $\mathbb{P}\in V$, $V^{\mathbb{P}}\models\phi$.
When we restrict ourselves to the class of $c.c.c.$ forcing notions,  then we say $\phi$ is $c.c.c.$-forceable $(\diamondsuit_{c.c.c.}\phi)$ or $c.c.c.$-necessary $(\square_{c.c.c.}\phi)$. Note that $\diamondsuit\phi$, $\square\phi$, $\diamondsuit_{c.c.c.}\phi$ and $\square_{c.c.c.}\phi$ are all first order definable.
Now we define some principles that are schemes in the first order language.
\begin{definition}{(The Maximality Principle)}
 Hamkins' maximality principle $({\rm MP}(X))$ asserts
 $$\diamondsuit\square\phi\Longrightarrow\phi,$$
 whenever $\phi$ is a sentence in the language of set theory with parameters from $X$.
\end{definition}
\begin{definition}{(The Necessary Maximality Principle)}
Hamkins' necessary maximality principle $(\square {\rm MP}(X))$ asserts  ${\rm MP}(X)$ is necessary, i.e., for any forcing notion $\mathbb{P}$, $V^{\mathbb{P}}\models {\rm MP}(X)$; in particular $V\models{\rm MP}(X).$
\end{definition}
We use the notations ${\rm MP}_{c.c.c}(X)$ and $\square_{c.c.c.}{\rm MP}_{c.c.c.}(X)$, when we restrict ourselves to the class of $c.c.c.$-forcing notions. We may note that Hamkins has used the notation $\square {\rm MP}_{c.c.c.}(X)$ to denote our $\square_{c.c.c.}{\rm MP}_{c.c.c.}(X)$.

Let $V_{\delta}\prec V$ be the scheme that asserts $\forall x\in V_{\delta}(\phi(x)\longleftrightarrow \phi(x)^{V_{\delta}})$, whenever $\phi(x)$ is a formula with only one free variable.
The following is an immediate consequence of the Montague-Levy reflection theorem.
\begin{lemma}{([3])}
If $ZFC$ is consistent, then is $ZFC+V_{\delta}\prec V$.
\end{lemma}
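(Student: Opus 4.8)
The plan is to deduce the statement from the Montague--Levy reflection theorem by a standard compactness argument. Expand the language of set theory by a single new constant symbol $\delta$, and let $T$ be the theory consisting of $ZFC$ together with every instance of the scheme $V_\delta\prec V$, that is, all sentences
$$\sigma_\phi\;:\;\forall x\in V_\delta\bigl(\phi(x)\leftrightarrow\phi(x)^{V_\delta}\bigr),$$
one for each formula $\phi(x)$ of the original language with a single free variable. It suffices to show that $\Con(ZFC)$ implies $\Con(T)$, and by the compactness theorem this in turn reduces to showing that every finite subtheory $T_0\subseteq T$ is satisfiable in some model of $ZFC$.

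So fix a finite $T_0\subseteq T$. It contains only finitely many reflection instances, say $\sigma_{\phi_1},\dots,\sigma_{\phi_n}$, together with finitely many axioms of $ZFC$. Applying the Montague--Levy reflection theorem to the finite list $\phi_1,\dots,\phi_n$ (enlarged, as usual, to be closed under subformulas) yields a theorem of $ZFC$ asserting the existence of ordinals $\delta$ --- indeed a closed unbounded class of them --- for which each $\phi_i$ is absolute between $V_\delta$ and $V$; in other words $ZFC$ proves $\exists\delta\,(\sigma_{\phi_1}\wedge\cdots\wedge\sigma_{\phi_n})$. Now let $M$ be any model of $ZFC$, which exists by hypothesis. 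Interpreting the constant $\delta$ in $M$ by an ordinal witnessing this existential statement produces a structure satisfying $\sigma_{\phi_1},\dots,\sigma_{\phi_n}$ as well as every $ZFC$ axiom occurring in $T_0$; hence $M\models T_0$. Thus every finite fragment of $T$ is consistent, and by compactness $T$ itself is consistent.

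The only point worth stressing --- and the reason the conclusion is a relative consistency statement rather than an outright theorem of $ZFC$ --- is the non-uniformity inherent in reflection: $ZFC$ proves that any given finite list of formulas reflects at some $V_\delta$, but it cannot prove that one definable $\delta$ reflects the entire scheme simultaneously, since this would yield an ordinal $\delta$ with $V_\delta\models ZFC$, i.e.\ a set model of $ZFC$, whose existence is unprovable in $ZFC$ by the second incompleteness theorem. The compactness theorem is exactly the device that bridges this gap, converting the per-finite-list reflection provable in $ZFC$ into the consistency of the full elementarity scheme $V_\delta\prec V$ for the new constant $\delta$. I expect no genuine obstacle beyond correctly organizing this finite-to-infinite passage and keeping track of the fact that $\delta$ must be treated as a fresh constant throughout.
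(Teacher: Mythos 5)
Your compactness-plus-reflection argument is correct and is precisely the standard proof the paper has in mind when it declares the lemma ``an immediate consequence of the Montague--Levy reflection theorem'' (the paper itself gives no proof, deferring to Hamkins [3]): treat $\delta$ as a fresh constant, use reflection to satisfy each finite batch of instances $\sigma_{\phi_1},\dots,\sigma_{\phi_n}$ inside an arbitrary model of $ZFC$, and conclude by compactness. Two cosmetic points only: the theory $T$ should also contain the axiom ``$\delta$ is an ordinal'' so that $V_\delta$ is meaningful, and it is the expansion $(M,\delta^M)$ rather than $M$ itself that satisfies $T_0$ --- neither affects the argument.
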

The first item of following lemma is a special case of theorem 14.1 of [1] and two other items easily follow from the first one.
\begin{lemma}{([1], Theorem 14.1)}
\begin{enumerate}
\item
If $\mathbb{P}$ is a forcing notion of size $\kappa$ and $\Vdash_{\mathbb{P}}|\kappa|=\aleph_{0}$, then $\mathbb{P}$ is forcing equivalent to $Coll(\omega,\kappa)$.
\item
If $\kappa$ is an inaccessible cardinal and $\mathbb{P}\in V_{\kappa}$, then $V^{\mathbb{P}}\subseteq V^{Coll(\omega,<\kappa)}$.
\item
If $\mathbb{P}$ is a forcing notion of size  $\leq \kappa$, then there exists a projection from $Coll(\omega,\kappa)$ to $\mathbb{P}$.
\end{enumerate}
\end{lemma}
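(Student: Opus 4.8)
The plan is to take item~(1) as given---it is exactly the special case of Theorem~14.1 of [1] asserting that any notion of forcing of cardinality $\kappa$ which collapses $\kappa$ to $\aleph_0$ has the same Boolean completion as $Coll(\omega,\kappa)$---and to read off (3) and then (2) from it by elementary absorption arguments. The entire difficulty resides in (1), whose proof is the genuine structure theory of collapsing algebras (the uniqueness of the collapse, established by a back-and-forth construction on complete Boolean algebras); I would simply cite it. Granting (1), the only remaining obstacle is the bookkeeping needed to put the relevant auxiliary posets into the exact form to which (1) applies.

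For (3), the key observation is that the poset of finite partial functions from $\omega$ into $\kappa$ is absolute, so $Coll(\omega,\kappa)$ as computed in any forcing extension is literally the ground-model poset; hence the two-step iteration $\mathbb{P}\ast\dot{Coll}(\omega,\kappa)$ is just the product $\mathbb{P}\times Coll(\omega,\kappa)$. I would then note that this product has cardinality exactly $\kappa$ (using $|\mathbb{P}|\le\kappa$ and $|Coll(\omega,\kappa)|=\kappa$) and that it collapses $\kappa$ to $\aleph_0$, since the second factor already adjoins a surjection of $\omega$ onto $\kappa$ that survives into the full extension. Item~(1) therefore applies and gives an isomorphism of the Boolean completions of $\mathbb{P}\times Coll(\omega,\kappa)$ and of $Coll(\omega,\kappa)$. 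As the first-coordinate map is a projection of $\mathbb{P}\times Coll(\omega,\kappa)$ onto $\mathbb{P}$, transporting it across this isomorphism and restricting to the dense copies of the two posets yields the required projection from $Coll(\omega,\kappa)$ onto $\mathbb{P}$. The one point that needs care is the cardinality computation, since (1) demands a poset of size \emph{exactly} $\kappa$.

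For (2), put $\lambda=\max(|\mathbb{P}|,\aleph_0)$; since $\kappa$ is inaccessible and $\mathbb{P}\in V_\kappa$ we have $|\mathbb{P}|<\kappa$, whence $\lambda<\kappa$. Applying (3) with $\lambda$ in place of $\kappa$ produces a projection from $Coll(\omega,\lambda)$ onto $\mathbb{P}$, so that $V^{\mathbb{P}}\subseteq V^{Coll(\omega,\lambda)}$. It then remains to absorb this single collapse into the Levy collapse: writing $Coll(\omega,<\kappa)$ as the finite-support product $\prod_{\alpha<\kappa}Coll(\omega,\alpha)$ and isolating the coordinate $\alpha=\lambda$ exhibits $Coll(\omega,\lambda)$ as a direct factor of $Coll(\omega,<\kappa)$, so the projection onto that factor gives $V^{Coll(\omega,\lambda)}\subseteq V^{Coll(\omega,<\kappa)}$. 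Composing the two inclusions yields $V^{\mathbb{P}}\subseteq V^{Coll(\omega,<\kappa)}$, as desired; note that inaccessibility enters only to guarantee $\lambda<\kappa$, and not through any chain-condition property of the collapse.
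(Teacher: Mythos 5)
Your proposal is correct and matches the paper's treatment: the paper supplies no proof of this lemma at all, citing [1, Theorem 14.1] for item (1) and remarking only that the other two items ``easily follow from the first one,'' and your product-absorption derivation of (3) (apply (1) to $\mathbb{P}\times Coll(\omega,\kappa)$, which has size $\kappa$ and collapses $\kappa$) together with your factor-projection derivation of (2) (absorb $Coll(\omega,\lambda)$ for $\lambda=\max(|\mathbb{P}|,\aleph_0)<\kappa$ as a coordinate of the Levy collapse) is exactly the standard way to fill in that remark. The one small wrinkle---that transporting the first-coordinate projection across the isomorphism of Boolean completions naturally yields a projection onto the completion of $\mathbb{P}$ rather than onto $\mathbb{P}$ itself---is harmless for every use made of the lemma and is glossed over at the same level of precision in the paper's own statement.
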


\begin{lemma}
Suppose that $\mathbb{P}\subseteq V_{\delta}$ is a $\delta-c.c$ forcing notion. If $G$ is a $\mathbb{P}$-generic filter over $V$ and $V_{\delta}\prec V$, then $G$ is $\mathbb{P}$-generic over $V_{\delta}$ and $V_{\delta}[G]\prec V[G] $.
\end{lemma}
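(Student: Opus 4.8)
The plan is to prove the two conclusions in turn. Genericity over $V_\delta$ is a soft consequence of the chain condition together with elementarity, whereas $V_\delta[G]\prec V[G]$ is the substantive point and will rest on transferring the forcing relation between $V_\delta$ and $V$; it is there that the $\delta$-c.c.\ does its real work.

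First I would dispose of genericity. The $\delta$-c.c.\ guarantees that every maximal antichain $A$ of $\mathbb{P}$ lying in $V$ has $\lvert A\rvert<\delta$; since $A\subseteq\mathbb{P}\subseteq V_\delta$ and $\delta$ is regular, the ranks occurring in $A$ are bounded below $\delta$, so $A\in V_\delta$. Hence $\mathbb{P}$ has exactly the same maximal antichains in $V$ and in $V_\delta$, and a filter meeting all of them over $V$ meets all of them over $V_\delta$. As $G$ is $\mathbb{P}$-generic over $V$, it is therefore $\mathbb{P}$-generic over $V_\delta$. (Equivalently, any $D\in V_\delta$ that $V_\delta$ believes to be dense is dense in $V$ by $V_\delta\prec V$, and $G$ meets it.)

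For $V_\delta[G]\prec V[G]$ I would argue by Tarski--Vaught, the engine being the claim that the forcing relation is absolute between the two models on names from $V_\delta$: for every formula $\phi$, every $p\in\mathbb{P}$, and all $\mathbb{P}$-names $\dot a_1,\dots,\dot a_n\in V_\delta$, one has $p\Vdash^{V}_{\mathbb{P}}\phi(\dot a_1,\dots,\dot a_n)$ iff $p\Vdash^{V_\delta}_{\mathbb{P}}\phi(\dot a_1,\dots,\dot a_n)$. Granting this, suppose $V[G]\models\exists x\,\phi(x,\bar a)$ with $\bar a=\bar{\dot a}_G$ and $\bar{\dot a}\in V_\delta$. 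By the truth lemma pick $p\in G$ with $p\Vdash^{V}\exists x\,\phi(x,\bar{\dot a})$; transfer to $p\Vdash^{V_\delta}\exists x\,\phi(x,\bar{\dot a})$, apply the maximum principle inside $V_\delta$ (which models $\mathrm{ZFC}$ as $V_\delta\prec V$) to obtain a name $\dot b\in V_\delta$ with $p\Vdash^{V_\delta}\phi(\dot b,\bar{\dot a})$, transfer back to $p\Vdash^{V}\phi(\dot b,\bar{\dot a})$, and conclude $V[G]\models\phi(\dot b_G,\bar a)$ with witness $\dot b_G\in V_\delta[G]$. By Tarski--Vaught this yields $V_\delta[G]\prec V[G]$.

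The hard part will be the absoluteness claim, proved by induction on $\phi$. The Boolean cases and the implication from $\Vdash^{V_\delta}$ to $\Vdash^{V}$ follow from the induction hypothesis, the definability of forcing, and $V_\delta\prec V$ (names in $V_\delta$ are still names in $V$). The delicate implication is from $\Vdash^{V}$ to $\Vdash^{V_\delta}$ for an existential formula, where one must produce a single witnessing name \emph{inside} $V_\delta$, and this is exactly where the chain condition is used. Given $p\Vdash^{V}\exists x\,\phi(x,\bar{\dot a})$, by density and the maximum principle there is an antichain $A$ maximal below $p$ and names $\dot c_q$ ($q\in A$) with $q\Vdash^{V}\phi(\dot c_q,\bar{\dot a})$; by the $\delta$-c.c.\ we may take $\lvert A\rvert<\delta$ and, passing to nice names, each $\dot c_q$ of hereditary size $<\delta$. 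Mixing the $\dot c_q$ along $A$ yields $\dot b$ with $p\Vdash^{V}\phi(\dot b,\bar{\dot a})$, and since $A$ and the $\dot c_q$ are few and of small rank, $\dot b\in V_\delta$ as in the genericity step; reflecting the density statement defining $\Vdash$ through $V_\delta\prec V$ then gives $p\Vdash^{V_\delta}\phi(\dot b,\bar{\dot a})$, closing the induction. (When $\mathbb{P}\in V_\delta$ the whole claim collapses to a single application of $V_\delta\prec V$ to the formula defining $\Vdash_{\mathbb{P}}$; the chain condition is doing genuine work only in the case $\lvert\mathbb{P}\rvert=\delta$, where $\mathbb{P}$ itself need not lie in $V_\delta$.)
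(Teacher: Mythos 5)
Your proposal is correct and takes essentially the same route as the paper's own proof: genericity is obtained by applying $V_{\delta}\prec V$ to maximal antichains, and $V_{\delta}[G]\prec V[G]$ is verified by the Tarski--Vaught test, transferring the forcing relation between $V_{\delta}$ and $V$ and using the $\delta$-c.c.\ (mixing witness names over a small antichain) to land a witnessing name inside $V_{\delta}$ --- the paper simply compresses your inductive absoluteness claim into the phrase ``by elementarity \dots{} using $\delta$-c.c.-ness''. One small caveat: your assertion that $\delta$ is regular is not licensed by the stated hypotheses ($V_{\delta}\prec V$ does not imply regularity --- the least such $\delta$ has cofinality $\omega$), but this is harmless in context, since the lemma is only ever applied with $\delta$ inaccessible or weakly compact, and your parenthetical dense-set argument for genericity avoids the issue entirely.
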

\begin{proof}
Let $A\subseteq\mathbb{P}$ be a maximal antichain in $V_{\delta}$, hence $A\in V_{\delta}$ and it is a maximal antichain in $V$, which implies $G\cap A\neq\varnothing$, so $G$ is $\mathbb{P}$-generic over $V_{\delta}$. Suppose that $V[G]\models\exists x\phi(x)$, then there exists $p\in G$ such that $V\models p\Vdash\exists x\phi(x)$, thus by elementarity, $V_{\delta}\models\exists x p\Vdash\phi(x)$. Then using $\delta-c.c$-ness, one can find $x\in V_{\delta}[G]$ such that $V_{\delta}[G]\models\phi(x)$. Hence $V_{\delta}[G]\prec V[G]$.
\end{proof}
The following lemma is proved by Hamkins and Woodin using the method of Boolean valued models; we give a proof which avoids the use of Boolean valued models.
\begin{lemma}{([4])}
Let $\kappa$ be a weakly compact cardinal and let $\mathbb{P}$ be a $\kappa-c.c.$ forcing notion. Then, for any $x\in H(\kappa)^{\mathbb{P}}$,  there exists $\mathbb{Q}\lhd\mathbb{P}$ such that $x\in H(\kappa)^{\mathbb{Q}}$ and $\mathbb{Q}$ is a forcing notion of size less than $\kappa$.
\end{lemma}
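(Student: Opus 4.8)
The plan is to pass to the Boolean completion and reduce the problem to reflecting a $\kappa$-c.c.\ complete Boolean algebra down to a subalgebra of size less than $\kappa$, using the weak compactness of $\kappa$. Throughout I read $\mathbb{Q}\lhd\mathbb{P}$ as ``$\mathbb{Q}$ is a complete subforcing of $\mathbb{P}$'', and I set $\mathbb{B}=\mathrm{ro}(\mathbb{P})$, a $\kappa$-c.c.\ complete Boolean algebra. First I would normalise the parameter $x$: since $\Vdash_{\mathbb{P}}x\in H(\kappa)$ and $\kappa$ is (weakly compact, hence) inaccessible and preserved by the $\kappa$-c.c.\ forcing $\mathbb{P}$, the transitive closure of the object named by $x$ has size $<\kappa$, so after coding it by a well-founded relation on an ordinal I may take $x$ to be a name for a bounded subset of $\kappa$; by the $\kappa$-c.c.\ and regularity there is a fixed $\lambda^{*}<\kappa$ with $\Vdash x\subseteq\lambda^{*}$, so $x$ is determined by the Boolean values $b_{\alpha}$ of $\check\alpha\in x$ for $\alpha<\lambda^{*}$.

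The first real step is a coarse reduction using only inaccessibility. Choose $M\prec H(\theta)$ with $|M|=\kappa$, $\kappa\subseteq M$ and $\mathbb{B},x\in M$, and put $\mathbb{C}=\mathbb{B}\cap M$. Because $\mathbb{B}$ is $\kappa$-c.c.\ and $\kappa\subseteq M$, every maximal antichain of $\mathbb{B}$ lying in $M$ has size $<\kappa$ and is therefore a subset of $M$; by the standard criterion for elementary submodels of Boolean algebras this makes $\mathbb{C}$ a complete subalgebra of $\mathbb{B}$, i.e.\ $\mathbb{C}\lhd\mathbb{B}$. Moreover each $b_{\alpha}$ (for $\alpha<\lambda^{*}<\kappa$) lies in $\mathbb{B}\cap M=\mathbb{C}$, so $x$ is equivalent to a $\mathbb{C}$-name. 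Thus $x$ is already captured by a complete subforcing $\mathbb{C}$ of size $\kappa$; the whole remaining difficulty is to shave this down to size $<\kappa$, which is where weak compactness enters.

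For the second step I would fix a coding of $\mathbb{C}$ with underlying set $\kappa$ and regard the Boolean operations, the true supremum function $S$ on antichains of $\mathbb{C}$ (a legitimate subset of $V_{\kappa}$, since the $\kappa$-c.c.\ makes every antichain bounded and hence an element of $V_{\kappa}$), and the name $x$ as predicates $R$ on $V_{\kappa}$. Over $\langle V_{\kappa},\in,R\rangle$ the assertion that $R$ codes a $\kappa$-c.c.\ complete Boolean algebra with correct supremum predicate capturing $x$ is $\Pi^{1}_{1}$: the only genuinely second-order clause is ``every antichain is bounded'', which expresses the $\kappa$-c.c. Since $\kappa$ is weakly compact, i.e.\ $\Pi^{1}_{1}$-indescribable, this true $\Pi^{1}_{1}$ statement reflects to some $\gamma<\kappa$ with $\gamma>\lambda^{*}$; writing $\mathbb{D}=\mathbb{C}\cap V_{\gamma}$, reflection then tells me that $\mathbb{D}$ is a $\gamma$-c.c.\ Boolean algebra, that $S$ restricted to $V_{\gamma}$ still records the true $\mathbb{C}$-suprema, and that $x$ is a $\mathbb{D}$-name. (One could equivalently produce $\gamma$ from the embedding characterisation: taking a transitive $\kappa$-model $M^{*}\supseteq\mathbb{C}$ and an elementary $j\colon M^{*}\to N$ with critical point $\kappa$, one verifies in $N$ that $\mathbb{C}\lhd j(\mathbb{C})$ and that $j(x)$ is a $\mathbb{C}$-name, and pulls the existence of a small complete subalgebra back through $j$.)

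The main obstacle, and the point on which the argument turns, is that reflection only hands me an algebra $\mathbb{D}$ that is complete \emph{in itself}, whereas I need $\mathbb{D}$ to be a complete \emph{subalgebra} of $\mathbb{C}$. This is repaired precisely by the interplay of the $\kappa$-c.c.\ with weak compactness: since the $\kappa$-c.c.\ reflects to the $\gamma$-c.c., every maximal antichain $A$ of $\mathbb{D}$ has size $<\gamma$, is bounded in $\gamma$, and is therefore an \emph{element} of $V_{\gamma}$; hence its genuine supremum $S(A)=\sup^{\mathbb{C}}A$ was reflected and lies in $V_{\gamma}\cap\mathbb{C}=\mathbb{D}$, forcing $\sup^{\mathbb{C}}A=1$ and so keeping $A$ maximal in $\mathbb{C}$. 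Consequently $\mathbb{D}\lhd\mathbb{C}$, and by transitivity of the complete-subalgebra relation $\mathbb{D}\lhd\mathbb{C}\lhd\mathbb{B}$ yields $\mathbb{D}\lhd\mathbb{B}$. Taking $\mathbb{Q}$ to be a dense subset of $\mathbb{D}$ then gives a forcing notion of size $|\gamma|<\kappa$ with $\mathbb{Q}\lhd\mathbb{P}$ and $x\in H(\kappa)^{\mathbb{Q}}$, as required.
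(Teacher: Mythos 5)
Your proposal is correct in substance, but it takes a genuinely different route from the paper --- in effect you reconstruct the original Hamkins--Woodin argument that this paper explicitly set out to replace (the paper announces that its proof ``avoids the use of Boolean valued models''). The paper stays with the poset $\mathbb{P}$ itself: it fixes $\mathcal{M}\prec H_{\theta}$ with $|\mathcal{M}|=\kappa$ and $\mathcal{M}^{<\kappa}\subseteq\mathcal{M}$, and in the case $|\mathbb{P}|=\kappa$ takes an elementary embedding $j\colon\mathcal{M}\longrightarrow\mathcal{N}$ with critical point $\kappa$; transfer of small antichains shows $\mathbb{P}\lhd j(\mathbb{P})$ inside $\mathcal{N}$ and $j(\tau)=\tau$, so $\mathcal{N}$ sees a complete subforcing of $j(\mathbb{P})$ of size less than $|j(\mathbb{P})|$ capturing $j(\tau)$, and elementarity of $j$ pulls this statement back to $\mathbb{P}$; the case $|\mathbb{P}|>\kappa$ is first reduced via $\mathbb{P}\cap\mathcal{M}\lhd\mathbb{P}$, which is exactly your coarse reduction. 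Your main engine is instead $\Pi^{1}_{1}$-indescribability applied to the Boolean completion, with the genuine supremum function $S$ included as a predicate; that predicate does the real work, since it is what upgrades the reflected algebra $\mathbb{D}=\mathbb{C}\cap V_{\gamma}$ from ``complete in itself'' to a complete subalgebra of $\mathbb{C}$ (a maximal antichain $A$ of $\mathbb{D}$ is an element of $V_{\gamma}$ by the reflected chain condition, so its true supremum $S(A)$ lies in $\mathbb{D}$ and is forced to be $1$, whence $A$ stays predense in $\mathbb{C}$ by the infinite distributive law). You correctly identify and resolve exactly the obstacle on which the lemma turns, and your parenthetical embedding alternative is, in all but notation, the paper's proof. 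What each route buys: yours makes the indescribability character of weak compactness explicit and isolates the supremum predicate as the crux; the paper's avoids coding Boolean algebras into $V_{\kappa}$ altogether and argues directly with names and antichains.

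There is one genuine, though easily repaired, gap in your first step: $\kappa\subseteq M$ does not suffice for $\mathbb{C}=\mathbb{B}\cap M\lhd\mathbb{B}$. The criterion to verify is that every maximal antichain $A$ of $\mathbb{C}$ is predense in $\mathbb{B}$, and the standard argument derives this from $A\in M$: then $M$ believes $A$ is a maximal antichain of $\mathbb{B}$, and elementarity transfers this to $H(\theta)$. What you establish --- that antichains which are \emph{elements} of $M$ are \emph{subsets} of $M$ --- is the converse direction and does not help: a maximal antichain of $\mathbb{C}$ is a subset of $M$ of size $<\kappa$, but without closure of $M$ under $<\kappa$-sequences it need not belong to $M$, and then nothing keeps it predense in $\mathbb{B}$. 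The $\omega_{1}$-analogue shows this failure is real: for a normal splitting Suslin tree $T$ with nodes coded by countable ordinals and a countable $M\prec H(\theta)$ with $T\in M$, the suborder $T\cap M$ is the restriction of $T$ to levels below $\delta=M\cap\omega_{1}$; fixing a node $t$ at level $\delta$ with predecessor branch $b$, the set of immediate successors of nodes of $b$ not lying on $b$ is a maximal antichain of $T\cap M$, yet every member of it is incomparable with $t$, so $T\cap M\not\lhd T$ even though $T$ is c.c.c. The repair is exactly what the paper builds into its choice of model: since $\kappa$ is inaccessible you may demand $M^{<\kappa}\subseteq M$, after which every maximal antichain of $\mathbb{C}$ is an element of $M$. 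Note that the same closure is silently needed later in your argument anyway, namely to know that $S$ is total with values in $\mathbb{C}$, i.e.\ that $\sup^{\mathbb{B}}A\in\mathbb{C}$ for \emph{every} antichain $A\subseteq\mathbb{C}$ of size $<\kappa$, not only those belonging to $M$. With $M^{<\kappa}\subseteq M$ added, your argument goes through.
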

\begin{proof}
If $|\mathbb{P}|<\kappa$, then there is nothing to prove. So, assume $|\mathbb{P}|\geq\kappa$, and choose $\theta>\kappa$ large enough regular such that $\mathbb{P}\in H_{\theta}$. Without loss of generality suppose that $x\subseteq\kappa$ and $|x|<\kappa$. Let $\tau$ be a $\mathbb{P}$-name for $x$ and find an elementary substructure $\mathcal{M}$ of $H_{\theta}$ such that:
\begin{itemize}
\item
$\kappa,\tau,\mathbb{P}\in\mathcal{M}$,
\item
$|\mathcal{M}|=\kappa$,
\item
$\mathcal{M}^{<\kappa}\subseteq\mathcal{M}$.
\end{itemize}
The existence of $\mathcal{M}$ is guaranteed by the inaccessibility of $\kappa$. We divide the proof into two cases:
\begin{enumerate}
\item[Case 1.]
$|\mathbb{P}|=\kappa$: Since $\kappa$ is weakly compact, there exists a transitive model $\mathcal{N}$ and an elementary embedding $j:\mathcal{M}\longrightarrow\mathcal{N}$ with $crit(j)=\kappa$. We show that
$$N\models(\exists\mathbb{Q}\lhd j(\mathbb{P}))[|\mathbb{Q}|<| j(\mathbb{P})|\wedge(\exists\sigma 1_{j(\mathbb{P})}\Vdash\sigma=j(\tau)].$$
Without loss of generality suppose that $\mathbb{P}\subseteq V_{\kappa},$ which implies $\mathbb{P}\subseteq j(\mathbb{P})$. Let $A\subseteq\mathbb{P}$ be a maximal antichain in $\mathcal{N}$. So $|A|<\kappa,$ hence $A\in \mathcal{M}$ and
$$\mathcal{M}\models\forall p\in\mathbb{P}\exists q\in A~q\parallel p.$$
Using $j,$ we have
$$N\models\forall p\in j(\mathbb{P})\exists q\in j(A)=A~q\parallel p.$$
On the other hand, since $x\in H(\kappa)^{\mathbb{P}}$, and $\mathbb{P}$ is $\kappa-c.c.,$ we can assume $|\tau|<\kappa$, so $j(\tau)=\tau$. Put $\sigma=\tau$ which implies $1_{j(\mathbb{P})}\Vdash\sigma=\tau.$ Thus
$$N\models(\exists\mathbb{Q}\lhd j(\mathbb{P}))[|\mathbb{Q}|<| j(\mathbb{P})|\wedge(\exists\sigma 1_{j(\mathbb{P})}\Vdash\sigma=j(\tau)].$$
Now by elementarity of $j$, we have in $\mathcal{M}$:
$$\exists\mathbb{Q}\lhd \mathbb{P}~|\mathbb{Q}|<| \mathbb{P}|\wedge\exists\sigma 1_{\mathbb{P}}\Vdash\sigma=\tau.$$

\item[Case 2.]$|\mathbb{P}|>\kappa$: Consider $\mathbb{P}\cap\mathcal{M}$ and let $A\subseteq\mathbb{P}\cap\mathcal{M}$ be a maximal antichain. Then $|A|<\kappa$,  hence $A\in \mathcal{M}$ and $A$ is a maximal antichain in $\mathbb{P}$. Hence $\mathbb{P}\cap\mathcal{M}\lhd\mathbb{P}$. Then by case 1, there exists $\mathbb{Q}$ of size less than $\kappa$ and a $\mathbb{Q}$-name $\sigma$ such that $\mathbb{Q}\lhd\mathbb{P}\cap\mathcal{M}\lhd\mathbb{P}$ and $\vdash_{\mathbb{Q}}\tau=\sigma$.
\end{enumerate}
\end{proof}
\begin{theorem}{(Hamkins)}
The consistency of $ZFC$ implies the consistency of $ZFC+{\rm MP}$.
\end{theorem}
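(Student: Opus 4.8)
The plan is to start from an arbitrary model $V$ of $ZFC$ and produce a forcing extension of $V$ that satisfies ${\rm MP}$; by the forcing theorem this yields the desired relative consistency. Following the standard terminology, call a sentence $\phi$ a \emph{button} if $\diamondsuit\square\phi$ holds, and say the button is \emph{pushed} when $\square\phi$ holds. The first point I would record is the monotonicity of necessity: if $W\models\square\phi$ and $W'$ is any forcing extension of $W$, then every forcing extension of $W'$ is a forcing extension of $W$, so $W'\models\square\phi$. Writing $\mathrm{Nec}(W)=\{\phi:W\models\square\phi\}$, this says that $\mathrm{Nec}$ is non-decreasing along forcing extensions, and of course $\mathrm{Nec}(W)$ is a subset of the (countable) set of all sentences of set theory.

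The second observation is that these two facts already pin down the models of ${\rm MP}$: if $W$ is a forcing extension with the property that no further forcing strictly enlarges $\mathrm{Nec}$, then $W\models{\rm MP}$. Indeed, suppose $W\models\diamondsuit\square\phi$; then some extension $W[H]$ has $\phi\in\mathrm{Nec}(W[H])$, so by maximality $\phi\in\mathrm{Nec}(W)$, i.e.\ $W\models\square\phi$, and in particular $W\models\phi$. Thus it suffices to locate a forcing extension of $V$ along which $\mathrm{Nec}$ has been pushed as high as possible.

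To build such an extension I would iterate, pushing buttons one at a time: form a forcing iteration $\langle\mathbb{P}_\alpha,\dot{\mathbb{Q}}_\alpha:\alpha<\omega_1\rangle$ in which, at stage $\alpha$, a bookkeeping function hands us a sentence $\phi$; if $\phi$ is currently a button that is not yet pushed, we let $\dot{\mathbb{Q}}_\alpha$ be a forcing witnessing $\diamondsuit\square\phi$, and otherwise we force trivially. Each genuine push strictly enlarges $\mathrm{Nec}$, and since $\mathrm{Nec}$ is a subset of a countable set it can strictly increase only countably often; hence there is a stage $\alpha^*<\omega_1$ beyond which $\mathrm{Nec}$ is constant. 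Arranging the bookkeeping so that every sentence is offered cofinally often guarantees that at the final model no pushable button remains unpushed, so that model is $\mathrm{Nec}$-maximal and therefore satisfies ${\rm MP}$ by the previous paragraph.

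The main obstacle, and the step I would treat most carefully, is the iteration itself rather than any single one of these observations. First, the $\dot{\mathbb{Q}}_\alpha$ are arbitrary (not necessarily $c.c.c.$ or even cardinal-preserving) forcings, so I must fix a support convention under which the iteration is a legitimate forcing notion whose generic extension realizes the union of the stagewise extensions; this is where I expect the real limit-stage bookkeeping to live. Second, there is a metamathematical point: ${\rm MP}$ is a scheme and $\square,\diamondsuit$ are definable predicates, so the counting argument ($\mathrm{Nec}$ stabilizes below $\omega_1$) and the choice of $\alpha^*$ must be carried out inside $V$ in order for the final model to be seen to satisfy every instance of the scheme. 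Granting these, the relative consistency of $ZFC+{\rm MP}$ over $ZFC$ follows.
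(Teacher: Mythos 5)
Your two structural observations (monotonicity of $\mathrm{Nec}$ along forcing extensions, and that a $\mathrm{Nec}$-maximal extension satisfies ${\rm MP}$) are correct for each fixed sentence, and the button-pushing iteration is a genuinely different route from the paper's. But the step you propose to ``carry out inside $V$'' is precisely the step that cannot be carried out there. The set $\mathrm{Nec}(W)=\{\phi: W\models\square\phi\}$ is a set of \emph{codes} of sentences, and to work with it internally you need a single formula expressing ``$\square\phi$ holds'' uniformly in a variable code for $\phi$; equivalently, a uniform forcing/satisfaction predicate for variable sentences. No such formula exists: applied to the trivial forcing it would yield a truth predicate, contradicting Tarski's theorem. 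For each \emph{fixed} $\phi$ the assertion $\square\phi$ is first order (as the paper notes after Definition 2.2), but your construction needs uniformity in $\alpha$ twice over: the recursion defining the iteration (the case split ``is $\phi_\alpha$ currently an unpushed button?'' for varying $\phi_\alpha$) and the counting argument (the sequence $\langle\mathrm{Nec}(W_\alpha):\alpha<\omega_1\rangle$ stabilizes below $\omega_1$). Neither is formalizable in $ZFC$ as stated; and run externally instead, the counting also fails, since $V$ may be an externally countable model, in which case $\omega_1^V$ can have external cofinality $\omega$ and the countably many increase stages need not be bounded. This is a genuine gap, not bookkeeping. (By contrast, the support worry you raise is harmless: any standard support convention makes each $W_\alpha$ a ground of the final model, which is all you use.)

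The skeleton is salvageable, and comparing the fixes is instructive. One repair is compactness: to get $\mathrm{Con}(ZFC+{\rm MP})$ it suffices to verify finitely many instances $\diamondsuit\square\phi_i\rightarrow\phi_i$, $i<k$, at a time, and for those a finite iteration pushing each $\phi_i$ in turn is a perfectly definable forcing. The fact that makes this work --- and which your ``no pushable button remains unpushed'' step also silently uses --- is that $\diamondsuit\square\phi$ is downward absolute to ground models (an extension of an extension is an extension, so $\diamondsuit\diamondsuit\square\phi$ implies $\diamondsuit\square\phi$); hence if the final model thinks $\phi$ is a button, it was already a button at the stage devoted to it and got pushed there, so $\square\phi$ holds from that stage on. Note that once this absoluteness is isolated, your $\omega_1$-length iteration and the stabilization of $\mathrm{Nec}$ become unnecessary even heuristically. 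The paper's proof evades the Tarski obstacle differently and more uniformly: by Lemma 2.3 (Levy--Montague reflection plus compactness) one passes to a model of $ZFC$ plus the scheme $V_\delta\prec V$, forces once with $\mathbb{P}=Coll(\omega,\delta)$, and, given $V[G]\models\diamondsuit\square\phi$, reflects a witnessing pair $(\dot{\mathbb{Q}},\dot{q})$ into $V_\delta$ by elementarity; since $\mathbb{P}\ast\dot{\mathbb{Q}}$ then has size $\delta$ and collapses $\delta$ to $\omega$, Lemma 2.4(1) makes it forcing equivalent to $Coll(\omega,\delta)$ itself, so $\square\phi$, and hence $\phi$, already holds in $V^{\mathbb{P}}$. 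There a single collapse pushes every button at once, the verification is uniform in $\phi$ with no quantification over sentences, and the only scheme-theoretic debt ($V_\delta\prec V$) is paid once, by compactness, at the outset.
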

\begin{proof}
Suppose that $V_{\delta}\prec V$. Let $\mathbb{P}=Coll(\omega,\delta)$ and let $G$ be $\mathbb{P}$-generic over $V$. If $V[G]\models\diamondsuit\square\phi$, then for some $p\in G$ 
$$p\Vdash\exists\mathbb{Q}(\exists q\in\mathbb{Q}~q\Vdash\square\phi).$$
Since $p\in V_{\delta}$ and $V_{\delta}\prec V$, one can find $\dot{\mathbb{Q}}$ and $\dot{q}$ in $V_{\delta}$, such that $(p,\dot{q})\Vdash\square\phi$, so $V^{\mathbb{P}\ast\dot{\mathbb{Q}}}\models\square\phi$. On the other hand by lemma 2.4 and the fact that $|\mathbb{P}\ast\dot{\mathbb{Q}}|=|\delta|$, we have $\mathbb{P}\ast\dot{\mathbb{Q}}\approx\mathbb{P}$ which implies $V^{\mathbb{P}}\models\square\phi$. Thus $V^{\mathbb{P}}\models\phi,$ that completes the proof.
\end{proof}
\begin{theorem}{(Hamkins)}
$ZFC+{\rm MP}(\mathbb{R})$ is equiconsistent with $ZFC$ plus the existence of an inaccessible cardinal $\kappa$ such that $V_{\kappa}\prec V$.
\end{theorem}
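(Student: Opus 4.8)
The plan is to prove the two consistency implications separately. For the direction from the large cardinal, assume $V\models$``$\kappa$ is inaccessible and $V_\kappa\prec V$'', let $G$ be $Coll(\omega,<\kappa)$-generic over $V$, and aim for $V[G]\models{\rm MP}(\mathbb{R})$. Fix a real $r\in V[G]$ with $V[G]\models\diamondsuit\square\phi(r)$; the goal is $V[G]\models\phi(r)$. Since $Coll(\omega,<\kappa)$ is $\kappa$-c.c., $r$ appears at a bounded stage, say $r\in W:=V[G_\alpha]$ with $\alpha<\kappa$, and $V[G]=W[H]$ where $H$ is generic for the tail collapse (isomorphic to $Coll(\omega,<\kappa)^{W}$) and $\kappa$ remains inaccessible in $W$. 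The first step is to transfer the reflection hypothesis to $W$: applying Lemma 2.5 to the small forcing $Coll(\omega,<\alpha)\in V_\kappa$ (trivially $\kappa$-c.c.) together with $V_\kappa\prec V$ yields $V_\kappa^{W}=V_\kappa[G_\alpha]\prec W$, and note $r\in V_\kappa^{W}$.

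The heart of this direction is to replace the possibly large forcing witnessing $\diamondsuit\square\phi(r)$ by a small one. From $W[H]\models\diamondsuit\square\phi(r)$ I first observe that $W\models\diamondsuit\square\phi(r)$: a witness $\mathbb{Q}\in W[H]$ produces $W[H][K]\models\square\phi(r)$, and $W[H][K]$ is itself a forcing extension of $W$ (via $Coll(\omega,<\kappa)\ast\dot{\mathbb{Q}}$), so $\square\phi(r)$ is forceable over $W$. As $\square\phi(r)$ is first order definable and the forcing relation is definable, the assertion that some forcing $\mathbb{S}$ satisfies $1_{\mathbb{S}}\Vdash\square\phi(r)$ is a first order statement with parameter $r\in V_\kappa^{W}$, so $V_\kappa^{W}\prec W$ reflects it to a witness $\mathbb{S}\in V_\kappa^{W}$, necessarily of size $<\kappa$, with $W^{\mathbb{S}}\models\square\phi(r)$. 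Finally the absorption of Lemma 2.4(2) applies: since $\mathbb{S}\in V_\kappa^{W}$, the model $V[G]=W[H]=W^{Coll(\omega,<\kappa)}$ is a forcing extension of $W^{\mathbb{S}}$, and as $\square\phi(r)$ holds there we conclude $V[G]\models\phi(r)$. Thus $V[G]\models{\rm MP}(\mathbb{R})$; this follows the template of Theorem 2.7, the new ingredient being the use of $V_\kappa\prec V$ to push the witnessing forcing below $\kappa$.

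For the converse, assume $V\models{\rm MP}(\mathbb{R})$ and produce a model of $ZFC$ with an inaccessible $\kappa$ satisfying $V_\kappa\prec V$; the target is $L$ with $\kappa=\omega_1^{V}$. First, $\omega_1$ is inaccessible to reals: for any real $x$ the sentence asserting that $\omega_1^{L[x]}$ is countable is forceable (collapse $\omega_1^{L[x]}$) and, being upward absolute, is necessary once forced, so ${\rm MP}(\mathbb{R})$ makes it true in $V$; hence $\omega_1^{L[x]}<\omega_1^{V}$ for every real $x$, which forces $\kappa=\omega_1^{V}$ to be inaccessible in each $L[x]$, in particular in $L$. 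Second, I extract the reflection $L_\kappa\prec L$. Fix a formula $\psi$ and an ordinal $\beta<\kappa$ coded by a real; the sentence asserting that there is a countable $\delta>\beta$ with $L_\delta\prec_\psi L$ is forceable (by the Montague--Levy reflection theorem the $\psi$-reflecting ordinals form a club, so collapse one above $\beta$ to be countable) and is upward absolute, hence forceably necessary, so ${\rm MP}(\mathbb{R})$ renders it true in $V$. Therefore the $\psi$-reflecting ordinals are cofinal in $\omega_1^{V}$, and since that class is closed, $\kappa=\omega_1^{V}$ is itself $\psi$-reflecting. As $\psi$ was arbitrary, $L_\kappa\prec L$, i.e. $L\models V_\kappa\prec V$; together with the inaccessibility of $\kappa$ in $L$ this gives $L\models ZFC+$``$\kappa$ is inaccessible and $V_\kappa\prec V$''.

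I expect the main obstacle to be the reflection step of the converse. It is easy to see that ${\rm MP}(\mathbb{R})$ makes $\omega_1$ inaccessible to reals, but ``inaccessible $+\ V_\kappa\prec V$'' is strictly stronger than a bare inaccessible (it proves $\mathrm{Con}(ZFC+\text{inaccessible})$), so one cannot recover it for free through the Montague--Levy consistency argument of Lemma 2.3. The delicate point is to encode reflection as a genuine forceably necessary (``button'') statement by collapsing a reflecting ordinal to be countable, so that the property persists under all further forcing, and then to harvest, formula by formula, cofinally many countable reflecting ordinals below $\omega_1$; closure of the reflecting class then upgrades this to the full scheme $L_\kappa\prec L$ at $\kappa=\omega_1^{V}$. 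The forward direction requires the dual care of reflecting the witnessing forcing below $\kappa$ and absorbing it into the Levy collapse, but there the machinery of Lemmas 2.4 and 2.5 does the work directly.
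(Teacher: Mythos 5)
Your proposal is correct, and its right-to-left direction is essentially the paper's: force with $Coll(\omega,<\kappa)$, catch the real at a bounded stage $V[G_\lambda]$, use Lemma 2.5 to get $V_\kappa[G_\lambda]\prec V[G_\lambda]$, reflect the forcing witnessing $\diamondsuit\square\phi(r)$ below $\kappa$, and absorb it back into the collapse via Lemma 2.4. The divergence is in the converse. Both you and the paper first make $\omega_1$ inaccessible in $L$ via the button ``$\omega_1^{L[x]}$ is countable,'' but you then extract $L_{\omega_1}\prec L$ differently: the paper verifies the Tarski--Vaught criterion instance by instance, using for each $L\models\exists x\,\phi(x,a)$ the single button ``the least $\alpha$ with a witness $x\in L_\alpha$ satisfying $\phi(x,a)^{L_\alpha}$ is countable,'' whereas you use, for each formula $\psi$ and each $\beta<\omega_1$, the button ``there is a countable $\delta>\beta$ with $L_\delta\prec_\psi L$,'' harvest cofinally many reflecting ordinals below $\omega_1$, and conclude by closure of the reflecting class. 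Your variant is arguably cleaner at the one point the paper glosses: the paper's button only delivers $\phi(x,a)^{L_\alpha}$, and upgrading this to $\phi(y,a)$ in $L$ implicitly requires choosing $\alpha$ at a level that reflects $\phi$ --- exactly what your formulation builds in from the start. The price is a small caveat you should make explicit: for a single formula $\psi$ the class $\{\delta : L_\delta\prec_\psi L\}$ need not be closed; the standard fix is to run your button for the finite subformula-closed set generated by $\psi$ (as in the usual Montague--Levy argument), for which the reflecting class is genuinely club, after which closure at $\omega_1$ gives $L_{\omega_1}\prec_\psi L$ as you claim. With that routine adjustment both arguments go through and yield the same equiconsistency.
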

\begin{proof}
(Right to Left) Assume $\kappa$ is an inaccessible cardinal such that $V_{\kappa}\prec V$. Consider $\mathbb{P}=Coll(\omega,<\kappa)$. Let $G$ be a $\mathbb{P}$-generic forcing over $V$. Suppose that $\bar{x}\in2^{\omega}\cap V[G]$, so $\bar{x}\in V[G_{\lambda}]$, for some regular cardinal $\lambda<\kappa$, where $G_{\lambda}=G\cap Coll(\omega,<\lambda)$ is generic for $\mathbb{P}_{\lambda}=Coll(\omega,<\lambda)$. Since $V_{\kappa}\prec V,$ by lemma 2.5, $V_{\kappa}[G_{\lambda}]\prec V[G_{\lambda}]$. It follows that there exists $\mathbb{Q}\in V_{\kappa}[G_{\lambda}]$  such that $V[G_{\lambda}]^{\mathbb{Q}}\models\square\phi(\bar{x})$, hence $V^{\mathbb{P}_{\lambda}\ast\dot{\mathbb{Q}}}\models\phi(\bar{x})$. Then $V[G]\models\phi(\bar{x})$, by lemma 2.4 (3).\\
The other side has\ appeared as lemma 3.2 in [3]. We give a proof for completness.\\
(Left to Right) Assume ${\rm MP}(\mathbb{R})$. First we claim that $\omega_1$ is inaccessible to the reals. Thus let $r$ be a real and let $\phi(r)$ be the statement ``the $\omega_1$ of $L[r]$ is countable'', that is forceably necessary. Thus it is true by $MP(\mathbb{R})$, so we have $\omega_1^{L[r]}<\omega_1$. This implies  $\delta=\omega_{1}$ is inaccessible to the reals, and hence it is inaccessible in $L$. Now it is enough to show that $L_{\delta}\prec L$. Suppose that $L\models\exists x\phi(x,a)$, where $\phi(x,a)$ has parameters from $L_{\delta}$, that are coded in a single real $a$. Let $\psi$ be the following statement
\begin{center}
 ``The least $\alpha$ such that there is an $x\in L_{\alpha}$ with $\phi(x,a)^{L_{\alpha}}$, is countable''
\end{center}
It is forceably necessary, thus it's already true, that means the least such $\alpha$ is countable, so there exists $y\in L_{\delta}$ with $\phi(y,a)$.

\end{proof}

The following theorem is proved by Hamkins and Woodin [4]:
\begin{theorem}{(Hamkins-Woodin [4])}
$ZFC+\square_{c.c.c}{\rm MP}_{c.c.c}(\mathbb{R})$ is equiconsistent with $ZFC$ plus the existence of a weakly compact cardinal.
\end{theorem}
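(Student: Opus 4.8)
The statement is an equiconsistency, so I would establish the two directions separately. The forcing direction (from a weakly compact cardinal to a model of the principle) is where I would invest most of the work, using Lemma 2.6 as the engine; the lower bound (extracting a weakly compact cardinal in an inner model) I would handle by an $L$-reflection argument adapted from the left-to-right half of Theorem 2.8, with the caveat that the collapsing tricks available there do not survive the passage to $c.c.c.$ forcing.

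For the consistency of the principle, suppose $\kappa$ is weakly compact. I would build a finite-support iteration $\langle \mathbb{P}_\alpha, \dot{\mathbb{Q}}_\alpha : \alpha < \kappa\rangle$ of $c.c.c.$ forcing notions, driven by a bookkeeping function running over all pairs $(\bar x, \phi)$ consisting of a name for a real and a sentence, arranged so that each pair is treated cofinally in $\kappa$. At a stage $\alpha$ assigned $(\bar x, \phi)$, if $\bar x$ has already appeared and $V[G_\alpha] \models \diamondsuit_{c.c.c.}\square_{c.c.c.}\phi(\bar x)$, I let $\dot{\mathbb{Q}}_\alpha$ be a $c.c.c.$ forcing witnessing $\square_{c.c.c.}\phi(\bar x)$, and otherwise I force trivially. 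Since a finite-support iteration of $c.c.c.$ forcings is $c.c.c.$, $\mathbb{P}_\kappa$ is $\kappa$-$c.c.$, and every real of $V[G]$ appears at a bounded stage because $\kappa$ is regular. To verify ${\rm MP}_{c.c.c.}(\mathbb{R})$ in $V[G]$, take a real $\bar x$ with $V[G] \models \diamondsuit_{c.c.c.}\square_{c.c.c.}\phi(\bar x)$, fix a $c.c.c.$ forcing $\mathbb{Q} \in V[G]$ forcing $\square_{c.c.c.}\phi(\bar x)$, and apply Lemma 2.6 to the $\kappa$-$c.c.$ forcing $\mathbb{P}_\kappa \ast \dot{\mathbb{Q}}$ to locate both $\bar x$ and a witness for $\square_{c.c.c.}\phi(\bar x)$ inside a complete subforcing of size $<\kappa$; by the regularity of $\kappa$ together with the finite-support structure this reflects to some initial stage, so that $V[G_\alpha] \models \diamondsuit_{c.c.c.}\square_{c.c.c.}\phi(\bar x)$. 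The bookkeeping then forces $\square_{c.c.c.}\phi(\bar x)$ at a later stage, after which $\phi(\bar x)$ is true and remains true in every $c.c.c.$ extension, in particular in $V[G]$.

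The delicate point — and the step I expect to be the main obstacle — is the outer box, namely, establishing $V[G] \models \square_{c.c.c.}{\rm MP}_{c.c.c.}(\mathbb{R})$ rather than merely ${\rm MP}_{c.c.c.}(\mathbb{R})$. Here one must rerun the verification inside an arbitrary further $c.c.c.$ extension $V[G][K]$. The argument is robust precisely because Lemma 2.6 applies to every $\kappa$-$c.c.$ forcing, and $\mathbb{P}_\kappa \ast \dot{\mathbb{S}}$ remains $\kappa$-$c.c.$ for any $c.c.c.$ $\dot{\mathbb{S}} \in V[G]$; this is the feature that compels the full strength of weak compactness rather than a weaker reflection hypothesis. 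The technical heart is checking that the first-order definable predicate $\square_{c.c.c.}\phi(\bar x)$, whose meaning involves the proper class of all $c.c.c.$ forcings, is correctly reflected from $V[G][K]$ down to the small subforcing supplied by Lemma 2.6 — that is, that verifying $c.c.c.$-necessity requires only forcings of size below $\kappa$.

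For the lower bound I would show that $\mathfrak{c}$ is weakly compact in $L$, this being the cardinal playing the role of $\kappa$ (in the model just built $\mathfrak{c} = \kappa$, and weak compactness is downward absolute to $L$). The plan is to certify, through $c.c.c.$-forceably-necessary statements coded by reals, both that $\mathfrak{c}$ is inaccessible in $L$ and that $L$ satisfies the tree property at $\mathfrak{c}$, so that $\mathfrak{c}$ is $\Pi^1_1$-indescribable and hence weakly compact in $L$. The obstacle specific to this setting is that one cannot imitate the argument of Theorem 2.8, where one collapses $\omega_1^{L[r]}$ to be countable, since $c.c.c.$ forcing preserves all cardinals; instead one must detect failures of inaccessibility and of the tree property in $L$ through statements about the value of the continuum and about specializing $L$-trees, and argue that such a failure would yield a statement that is $c.c.c.$-forceably necessary yet false, contradicting the principle.
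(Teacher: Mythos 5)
You have reconstructed, in outline, the \emph{original} Hamkins--Woodin architecture: a finite-support $c.c.c.$ bookkeeping iteration of length $\kappa$, with Lemma 2.6 doing the localization. That is a genuinely different route from this paper, which never builds an iteration at all: here the consistency direction is obtained (Corollary 2.12, via Theorem 2.10) by forcing with $Coll(\omega,<\kappa)$ over a model with a weakly compact $\kappa$ satisfying $V_{\kappa}\prec V$, so that elementarity of $V_{\kappa}[K]\prec V[K]$ (Lemma 2.5) replaces bookkeeping, at the cost of the nominally stronger hypothesis of Remark 2.11; for the exact equiconsistency the paper simply defers to [4]. Your route, if completed, would buy the sharp upper bound from a weakly compact alone --- but as written it has two genuine gaps, both at the crux.

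First, the step you yourself call ``the technical heart'' is missing, and Lemma 2.6 cannot supply it. Lemma 2.6 places a \emph{name for a real} inside a complete subforcing of size $<\kappa$; it says nothing about the sentence $\square_{c.c.c.}\phi(\bar x)$, whose truth quantifies over the proper class of all $c.c.c.$ forcings of arbitrary size over the extension. Reflecting ``$V[G][K]\models\diamondsuit_{c.c.c.}\square_{c.c.c.}\phi(\bar x)$'' down to ``$V[G_{\alpha}]\models\diamondsuit_{c.c.c.}\square_{c.c.c.}\phi(\bar x)$'' requires applying the weakly compact embedding (equivalently, $\Pi^1_1$-indescribability) to that statement itself; this is the actual content of [4], not a corollary of Lemma 2.6. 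Moreover, even granting the reflection to some stage $\alpha$, your bookkeeping treats the pair at a later stage $\beta$, and $\diamondsuit_{c.c.c.}\square_{c.c.c.}\phi(\bar x)$ is not obviously preserved from $V[G_{\alpha}]$ to $V[G_{\beta}]$: a witnessing forcing $\mathbb{Q}\in V[G_{\alpha}]$ may fail to remain $c.c.c.$, or to remain a witness, over $V[G_{\beta}]$ (the quotient of the iteration need not stay $c.c.c.$ over $V[G_{\alpha}][h]$ --- Suslin-tree phenomena), so upward persistence needs its own argument. The parts of your scheme that do work are exactly the routine ones: tails of finite-support $c.c.c.$ iterations are $c.c.c.$, so a necessity forced at stage $\alpha+1$ persists to $V[G]$ and all its $c.c.c.$ extensions, and every real appears at a bounded stage by $\kappa$-$c.c.$ plus regularity.

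Second, the lower bound as sketched would fail. Specializing posets are $c.c.c.$ only for trees of height $\omega_1$; since ${\rm MP}_{c.c.c.}(\mathbb{R})$ implies $\neg CH$ (because $\neg CH$ is $c.c.c.$-forceably $c.c.c.$-necessary), your target cardinal $\mathfrak{c}$ exceeds $\omega_1$, and specializing $L$-trees of height $\mathfrak{c}$ is not a $c.c.c.$ operation. In addition, $\mathfrak{c}$-trees and ordinals $\geq\omega_1$ are not coded by real parameters, so such statements lie outside the scope of ${\rm MP}_{c.c.c.}(\mathbb{R})$ unless made parameter-free, and then the necessity half breaks because further $c.c.c.$ forcing moves $\mathfrak{c}$ (compare: ``$\mathfrak{c}$ is a limit cardinal of $L$'' is not $c.c.c.$-necessary). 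The paper's own lower-bound technique --- for its stronger principle in Theorem 2.10 --- is to use the outer box to pass to a $c.c.c.$ extension with $MA+\neg CH$ and then quote Harrington--Shelah [5] as a black box; but that route leans on full ${\rm MP}(\mathbb{R})$ in the extension to certify $\omega_1$ inaccessible to the reals via Theorem 2.8, which, as you correctly observe, is unavailable from the $c.c.c.$ principle alone. So the genuinely $c.c.c.$-only extraction of a weakly compact in $L$ is precisely the part of [4] your sketch would have to recreate, and the tree-property outline does not yet do it.
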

We now state and prove a generalization of the above theorem.
The next result is obtained in joint work with M. Golshani, and is presented here with his kind permission. 
\begin{theorem}
The followings are equiconsistent:
\begin{enumerate}
\item
$ZFC$ plus the existence of a weakly compact cardinal $\kappa$ such that $V_{\kappa}\prec V$.
\item
$ZFC+\square_{c.c.c}{\rm MP}(\mathbb{R})$.
\end{enumerate}
\end{theorem}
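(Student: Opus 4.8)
The plan is to establish the two consistency implications separately, with $(1)\Rightarrow\mathrm{Con}(2)$ as the main construction and $(2)\Rightarrow\mathrm{Con}(1)$ as an inner‑model computation refining Theorem 2.7. For $(1)\Rightarrow\mathrm{Con}(2)$, assume $\kappa$ is weakly compact with $V_\kappa\prec V$, force with $\mathbb P=Coll(\omega,<\kappa)$, and aim at $V[G]\models\square_{c.c.c.}{\rm MP}(\mathbb R)$. Fix a c.c.c.\ $\mathbb R\in V[G]$ with name $\dot{\mathbb R}$, a generic $H$, a real $\bar x\in V[G][H]$, and suppose $V[G][H]\models\diamondsuit\square\phi(\bar x)$; I must produce $\phi(\bar x)$ there. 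First observe that $\mathbb P*\dot{\mathbb R}$ is $\kappa$-c.c.\ over $V$, since $Coll(\omega,<\kappa)$ is $\kappa$-c.c.\ and $\dot{\mathbb R}$ is forced c.c.c.\ (hence $\kappa$-c.c.), with $\kappa$ regular. Then apply Lemma 2.6 in $V$ to capture $\bar x$: there is $\mathbb Q\lhd\mathbb P*\dot{\mathbb R}$ with $|\mathbb Q|<\kappa$ and $\bar x\in V[g]$, where $g=(G*H)\restriction\mathbb Q$; as $|\mathbb Q|<\kappa$ we may take $\mathbb Q\in V_\kappa$, so $\bar x\in V_\kappa[g]$. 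Levy--Solovay keeps $\kappa$ inaccessible in $V[g]$, and Lemma 2.5 (with $\delta=\kappa$) gives $V_\kappa[g]\prec V[g]$.

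Next I transfer forceable necessity downward and reflect it. Since $\mathbb Q\lhd\mathbb P*\dot{\mathbb R}$, the model $V[G][H]$ is a forcing extension of $V[g]$, so composing the quotient with the $\diamondsuit$-witness gives $V[g]\models\diamondsuit\square\phi(\bar x)$. Reflecting this existential statement through $V_\kappa[g]\prec V[g]$, with the real parameter $\bar x\in V_\kappa[g]$, yields $\mathbb T\in V_\kappa[g]$ of size $<\kappa$ with $V[g]^{\mathbb T}\models\square\phi(\bar x)$. It remains to realize a $\mathbb T$-generic over $V[g]$ inside $V[G][H]$, and here the Levy collapse is essential: the quotient $(\mathbb P*\dot{\mathbb R})/\mathbb Q$ collapses $\kappa$ to $\omega_1$, so the fewer-than-$\kappa$ dense subsets of $\mathbb T$ lying in $V[g]$ become countable in $V[G][H]$; enumerating them in order type $\omega$ and choosing a descending chain meeting each builds a $\mathbb T$-generic $t\in V[G][H]$ over $V[g]$. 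Then $V[g][t]\models\square\phi(\bar x)$ and $V[G][H]$ is a forcing extension of $V[g][t]$, whence $V[G][H]\models\phi(\bar x)$, completing this direction.

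For $(2)\Rightarrow\mathrm{Con}(1)$, note that $\square_{c.c.c.}{\rm MP}(\mathbb R)$ contains ${\rm MP}(\mathbb R)$ as its trivial-forcing instance, so Theorem 2.7 already delivers that $\delta:=\omega_1^V$ is inaccessible in $L$ and $L_\delta\prec L$. The residual task is to extract weak compactness of $\delta$ in $L$, which under inaccessibility is the tree property there. I would obtain it by adapting the Hamkins--Woodin argument of Theorem 2.8: starting from a putative $L$-$\delta$-Aronszajn tree $T$, pass to a c.c.c.\ extension---where ${\rm MP}(\mathbb R)$ still holds by c.c.c.-necessity---engineered to add a cofinal branch through $T$, and use $L_\delta\prec L$ to reflect the branch into $L$, contradicting that $T$ is Aronszajn there. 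Hence $L$ has no $\delta$-Aronszajn tree, so $L\models$``$\delta$ is weakly compact $\wedge\ L_\delta\prec L$'', witnessing $\mathrm{Con}(1)$.

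The main obstacle lies in this last extraction. Unlike reals, a $\delta$-tree has size $\omega_1$ and is not a legal parameter for ${\rm MP}(\mathbb R)$, so the branch-adding statement cannot be fed to the principle verbatim; one must isolate a canonical, parameter-free $L$-definable tree (definable from $\delta=\omega_1$), arrange the branch-adding forcing to be genuinely c.c.c.\ in $V$---so that $\omega_1=\delta$ and the tree's definition are preserved---and ensure the branch is stable enough to reflect through $L_\delta\prec L$. Reconciling the real-parameter restriction with these $\delta$-sized objects, and verifying c.c.c.-ness of the branch forcing in the possibly much larger $V$, is the delicate heart of the lower bound; by contrast, the absorption step in the upper bound is a routine density argument once $\kappa$ has been kept inaccessible in $V[g]$.
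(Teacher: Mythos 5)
Your first direction, $(1)\Rightarrow\mathrm{Con}(2)$, is essentially the paper's argument: the same decomposition via $\kappa$-c.c.-ness of $\mathbb{P}\ast\dot{\mathbb{Q}}$, Lemma 2.6 to capture the real in a complete suborder $\mathbb{Q}$ of size $<\kappa$, Lemma 2.5 to reflect $\diamondsuit\square\phi(\bar x)$ into $V_\kappa[g]$, and then absorption of the small poset $\mathbb{T}$. Your finishing move differs slightly from the paper's: where the paper invokes the canonical projection $Coll(\omega,<\kappa)\longrightarrow\mathbb{S}\ast\dot{\mathbb{T}}$ (Lemma 2.4(3)), you build a $\mathbb{T}$-generic over $V[g]$ by hand inside $V[G][H]$, using that $\kappa$ stays inaccessible in $V[g]$ (Levy--Solovay) while everything below $\kappa$ is countable in $V[G][H]$, and then conclude via the fact that $V[G][H]$ is a forcing extension of the intermediate model $V[g][t]$. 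That last step is correct but silently uses the intermediate model theorem; you should cite it, since $t$ is constructed inside $V[G][H]$ rather than given by a projection. Modulo that, this direction is sound and, if anything, more detailed than the paper's rather terse absorption step.

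The direction $(2)\Rightarrow\mathrm{Con}(1)$, however, has a genuine gap, and you have in effect conceded it yourself. Your extraction of $L_\delta\prec L$ and inaccessibility of $\delta$ in $L$ from the trivial-forcing instance ${\rm MP}(\mathbb{R})$ is fine (this is the left-to-right half of Theorem 2.8, not 2.7 as you cite). But your plan for weak compactness --- fix an $L$-$\delta$-Aronszajn tree, add a cofinal branch by c.c.c.\ forcing, and reflect --- founders exactly where you say it does: a $\delta$-tree is not a real, so no branch-adding statement about it is a legal instance of ${\rm MP}(\mathbb{R})$, and there is no reason a branch-adding forcing for an arbitrary such tree is c.c.c.\ in $V$ (for a special Aronszajn tree it cannot be, as c.c.c.\ forcing preserves specialness and $\omega_1$). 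The missing idea, which is the whole point of the hypothesis $\square_{c.c.c.}$, is the paper's: force $MA+\neg CH$ with a c.c.c.\ iteration $\mathbb{P}$; by c.c.c.-necessity, $V^{\mathbb{P}}\models{\rm MP}(\mathbb{R})$, so in $V^{\mathbb{P}}$ one has simultaneously $L_\delta\prec L$, $\omega_1$ inaccessible to the reals, and $MA+\neg CH$; then the theorem of Harrington--Shelah [5] (under $MA$, if $\omega_1$ is inaccessible to the reals then it is weakly compact in $L$) yields that $\delta=\omega_1^{V^{\mathbb{P}}}$ is weakly compact in $L$, so $L$ itself witnesses $\mathrm{Con}(1)$. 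This replaces your entire tree-by-tree reflection scheme with a single black-box citation and is the step your proposal is missing; without it (or a full reconstruction of the Harrington--Shelah argument, which is substantially harder than your sketch suggests), the lower bound is not proved.
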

\begin{remark}
Clearly, $\square_{c.c.c}{\rm MP}(\mathbb{R})$  is stronger than $\square_{c.c.c}{\rm MP}_{c.c.c}(\mathbb{R}),$ but weaker than $\square {\rm MP}(\mathbb{R})$. It is easily seen that the condition 1 of Theorem 2.10 is strictly stronger than the existence of a weakly compact cardinal, but it is consistent to have this condition,  if, for example, one assumes the existence of a measurable cardinal.
\end{remark}
\begin{proof}
($1\Longrightarrow 2$) Let $\kappa$ be a weakly compact cardinal such that $V_{\kappa}\prec V$. Let $\mathbb{P}=Coll(\omega,<\kappa)$ and let $G$ be a $\mathbb{P}$-generic filter over $V$. By Theorem 2.8,  $V[G]\models {\rm MP}(\mathbb{R})$. Now let $\mathbb{Q}$ be an arbitrary $c.c.c.$ forcing in $V[G]$ and let $H$ be a $\mathbb{Q}$-generic filter over $V[G]$. Our aim is to show that $V[G][H]\models {\rm MP}(\mathbb{R})$. Thus assume $r\in\mathbb{R}\cap V[G][H]$ and $V[G][H]\models\diamondsuit\square\phi(r)$.

Clearly $\mathbb{P}\ast\dot{\mathbb{Q}}$ is $\kappa-c.c$., so Lemma 2.6 guarantees the existence of a $\kappa-c.c.$ forcing notion $\mathbb{S}\lhd\mathbb{P}\ast\dot{\mathbb{Q}}$ such that $|\mathbb{S}|<\kappa$ and $r\in V^{\mathbb{S}}$. Assume, without loss of generality, that  $\mathbb{S}\in V_{\kappa}$.

Let $K$ be an $\mathbb{S}$-generic filter over $V$, so that
$$V[K]\subseteq V[G\ast H].$$

Since $V[G\ast H]\models\diamondsuit\square\phi(r)$, we have $V[K]\models\diamondsuit\square\phi(r)$. On the other hand we have $V_{\kappa}[K]\prec V[K]$, so $V_{\kappa}[K]\models\diamondsuit\square\phi(r)$. Thus there exists $\mathbb{T}\in V_{\kappa}[K]$ and a $\mathbb{T}$-generic filter $L$ such that $V_{\kappa}[K][L]\models\square\phi(r)$. But we have a canonical projection $\pi:Coll(\omega,<\kappa)\longrightarrow \mathbb{S}\ast\dot{\mathbb{T}}$, which implies $V[G\ast H]\models\phi(r)$.

($2\Longrightarrow1$)
Assume $V\models \square_{c.c.c}{\rm MP}(\mathbb{R})$, and let $\mathbb{P}$ be some $c.c.c$ forcing notion which forces $MA+ \neg CH$. Then $V^{\mathbb{P}}\models{\rm MP}(\mathbb{R})+\square_{c.c.c}{\rm MP}_{c.c.c}(\mathbb{R})$. It follows from Theorem 2.8  that $\delta=\omega_1^{V^{\mathbb{P}}}$ is inaccessible in $L$ and $L_{\delta}\prec L$. Since $MA$ holds, it follows from a theorem of Harrington-Shelah [5] that $\omega_1$ is weakly compact in $L$.
\end{proof}
As a corollary, we obtain a simpler proof of Hamkins-Woodin's theorem 3.1.
\begin{corollary}
Suppose there exists a weakly compact cardinal $\kappa$ such that $V_{\kappa}\prec V$. Then $\square_{c.c.c}{\rm MP}_{c.c.c}(\mathbb{R})$ is consistent.
\end{corollary}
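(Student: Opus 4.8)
The plan is to obtain the required model by composing the two implications of Theorem 2.10 rather than by a direct iteration. Starting from a weakly compact $\kappa$ with $V_\kappa\prec V$, the proof of $(1)\Rightarrow(2)$ already produces a model of $\square_{c.c.c}{\rm MP}(\mathbb{R})$: if $G$ is $Coll(\omega,<\kappa)$-generic then $V[G]\models\square_{c.c.c}{\rm MP}(\mathbb{R})$. This model cannot by itself witness the corollary, since the Levy collapse of an inaccessible forces $2^{\aleph_0}=\kappa=\aleph_1$, i.e. $CH$, whereas ${\rm MP}_{c.c.c}(\mathbb{R})$ implies $\neg CH$; indeed $\diamondsuit_{c.c.c}\square_{c.c.c}\neg CH$ is a theorem of $ZFC$ — add $\aleph_2$ Cohen reals, after which no c.c.c. forcing can restore $CH$ — so ${\rm MP}_{c.c.c}(\mathbb{R})$ forces its own conclusion $\neg CH$. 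A second forcing step is therefore unavoidable.

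So I would next fix the model $W_0=V[G]$ of $\square_{c.c.c}{\rm MP}(\mathbb{R})$ and force $MA+\neg CH$ over it by a c.c.c. poset $\mathbb{P}$, exactly as in the $(2)\Rightarrow(1)$ direction of Theorem 2.10. The point is then to invoke the intermediate conclusion obtained there, namely that $W_0^{\mathbb{P}}\models{\rm MP}(\mathbb{R})+\square_{c.c.c}{\rm MP}_{c.c.c}(\mathbb{R})$. As $W_0^{\mathbb{P}}$ is in particular a model of $\square_{c.c.c}{\rm MP}_{c.c.c}(\mathbb{R})$, this gives $\Con(ZFC+\square_{c.c.c}{\rm MP}_{c.c.c}(\mathbb{R}))$, which is what the corollary asserts. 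In other words the corollary is the composite of the two halves of Theorem 2.10: from the large cardinal hypothesis to $\square_{c.c.c}{\rm MP}(\mathbb{R})$, and from there to $\square_{c.c.c}{\rm MP}_{c.c.c}(\mathbb{R})$; this is why it is considerably shorter than the original Hamkins--Woodin construction.

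The substantive point — and the step I expect to be the main obstacle — is the upgrade $W_0^{\mathbb{P}}\models\square_{c.c.c}{\rm MP}_{c.c.c}(\mathbb{R})$. One cannot simply rerun the $(1)\Rightarrow(2)$ argument with the c.c.c. modalities in place of the full ones. There the decisive feature is that a small subforcing $\mathbb{S}\ast\dot{\mathbb{T}}$ below $\kappa$ forces $\square\phi(r)$ — full necessity — whereupon, since $Coll(\omega,<\kappa)$ projects onto $\mathbb{S}\ast\dot{\mathbb{T}}$, the target model is a forcing extension of $V^{\mathbb{S}\ast\dot{\mathbb{T}}}$ and $\phi(r)$ descends to it. With only $\square_{c.c.c}\phi(r)$ available this propagation breaks down, because the passage through $Coll(\omega,<\kappa)$ is not c.c.c.; it is precisely to repair this that $MA+\neg CH$ is forced, so that the witnessing forcings may be taken of size $<2^{\aleph_0}$ and the reflection supplied by $V_\kappa\prec V$ and Lemma 2.6 can be run inside the c.c.c. class. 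Finally I would stress that the outer box in $\square_{c.c.c}{\rm MP}_{c.c.c}(\mathbb{R})$ demands ${\rm MP}_{c.c.c}(\mathbb{R})$ in \emph{every} c.c.c. extension of $W_0^{\mathbb{P}}$, including ones in which $MA$ has been destroyed; it is this uniformity, rather than $MA$ holding in the final model alone, that makes the verification delicate, and carrying it out is exactly the content I am borrowing from the proof of Theorem 2.10.
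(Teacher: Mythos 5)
Your proposal coincides with the paper's (implicit) proof of this corollary: the paper gives no separate argument, and the intended route is exactly your composition — Theorem 2.10's $(1)\Rightarrow(2)$ direction produces a model of $\square_{c.c.c}{\rm MP}(\mathbb{R})$, over which one forces $MA+\neg CH$ by a c.c.c.\ poset and invokes the intermediate claim from the $(2)\Rightarrow(1)$ proof that $V^{\mathbb{P}}\models {\rm MP}(\mathbb{R})+\square_{c.c.c}{\rm MP}_{c.c.c}(\mathbb{R})$; your observations that the Levy collapse model itself satisfies $CH$ and hence cannot witness ${\rm MP}_{c.c.c}(\mathbb{R})$, so that a second step is unavoidable, are correct and consistent with the paper's structure. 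One caution: the step you rightly single out as the main obstacle — verifying $V^{\mathbb{P}}\models\square_{c.c.c}{\rm MP}_{c.c.c}(\mathbb{R})$ uniformly in all further c.c.c.\ extensions, including those where $MA$ fails — is stated in the paper's proof of Theorem 2.10 as a one-line assertion without argument, so in deferring to that proof you are borrowing an assertion rather than a worked-out verification.
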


Department of mathematics and Computer Sciences, Amirkabir University of Technology,
Hafez avenue 15194, Tehran, Iran.

E-mail address: rahmanmohammadpour@gmail.com


\begin{thebibliography}{99}
\bibitem{C}Cummings, James Iterated forcing and elementary embeddings. Handbook of set theory. Vols. 1, 2, 3, 775�-883, Springer, Dordrecht, 2010.
\bibitem{G}Golshani, Mohammad. Personal Communication.
\bibitem{H}Hamkins, Joel David. A simple maximality principle. J. Symbolic Logic 68 (2003), no. 2, 527--550.
\bibitem{H-W}Hamkins, Joel D.; Woodin, W. Hugh. The necessary maximality principle for c.c.c. forcing is equiconsistent with a weakly compact cardinal. MLQ Math. Log. Q. 51 (2005), no. 5, 493--498.
\bibitem{H-S}Harrington, Leo; Shelah, Saharon Some exact equiconsistency results in set theory. Notre Dame J. Formal Logic 26 (1985), no. 2, 178--188.
\bibitem{J}Jech, Thomas Set theory. The third millennium edition, revised and expanded. Springer Monographs in Mathematics. Springer-Verlag, Berlin, 2003. xiv+769 pp. ISBN: 3-540-44085-2
\bibitem{L}G. Leibman, Consistency Strengths of Maximality Principles. Ph. D. thesis, Graduate Center of the City University of
New York, 2004.
\end{thebibliography}
\end{document}